\newtheorem{theorem}{Theorem}
\theoremstyle{plain}
\newtheorem{definition}{Definition}
\newtheorem{example}{Example}
\newtheorem{solution}{Solution}
\numberwithin{equation}{section}
\begin{document}
\title[Completion of Quasi 2-normed space ]{A study involving the Completion
of Quasi 2-normed space}
\author{Mehmet K\i r}
\address{Atat\"{u}rk University, Faculty of Science, Department of
Mathematics, 25000 Erzurum, TURKEY}
\email{mehmet\_040465@yahoo.com}
\author{Mehmet Acikgoz}
\address{University of Gaziantep, Faculty of Science and Arts, Department of
Mathematics, 27310 Gaziantep, TURKEY}
\email{acikgoz@gantep.edu.tr}
\date{January 5, 2012}
\subjclass[2000]{Primary 46A16, 46Bxx, 54D35}
\keywords{ 2-normed spaces, quasi normed space, completion}

\begin{abstract}
The fundamental aim of this paper is to introduce and investigate a new
property of quasi $2$-normed space based on a question given by C. Park
(2006) [2] for the completion quasi $2$-normed space. Finally, we also find
an answer for a question Park's.
\end{abstract}

\maketitle

\section{Introduction, Definitions and Notations}

\bigskip In 1928, K. Menger introduced the notion called n-metrics (or
generalized metric). But many mathematicians had not paid attentions to
Menger's theory about generalized metrics. But{} several mathematicians, A.
Wald, L. M. Blumenthal, W. A. Wilson etc. have developed Menger's idea.

In 1963, S. G\"{a}hler limits Menger's considerations to $n=2$. G\"{a}hler's
study is more complete in view of the fact that he developes the topological
properties of the spaces in question. G\"{a}hler also proves that if the
space is a linear normed space, then it is possible to define 2-norm.

Since 1963, S. G\"{a}hler, Y. J. Cho, R. W. Frees, C. R. Diminnie, R. E.
Ehret, K. Is\'{e}ki, A. White and many others have studied on 2-normed
spaces and 2-metric spaces. It is well-known that $%
\mathbb{R}
$ is complete but $%
\mathbb{Q}
$ is not complete. Since $%
\mathbb{Q}
$ is dense in $%
\mathbb{R}
$ it is said that R is completion of $%
\mathbb{Q}
$. It is very important that an incomplete space can be completed in similar
sense. Complete spaces, in other words Banach spaces, play quite important
role in many branches of mathematics and its applications. Many
mathematicians showed the existince of completion of normed spaces (for more
information, see [1], [2], [6]). We shall also show completion of quasi
2-normed spaces via similar sense.

\begin{definition}
Let $X$ be a real linear space with $dim\geq 2$ and $\left\Vert
.,.\right\Vert :X^{2}\rightarrow \lbrack 0,\infty )$ a function. Then $%
\left( X,\left\Vert .,.\right\Vert \right) $ is called linear 2-normed
spaces if
\end{definition}

\QTP{Body Math}
$2N_{1})\left\Vert x,y\right\Vert =0\Longleftrightarrow $ $x$ \textit{and} $%
y $ \textit{linearly dependent,}

\QTP{Body Math}
$2N_{2})\left\Vert x,y\right\Vert =\left\Vert y,x\right\Vert ,$

\QTP{Body Math}
$2N_{3})\left\Vert \alpha x,y\right\Vert =\left\vert \alpha \right\vert
\left\Vert x,y\right\Vert ,$

\QTP{Body Math}
$2N_{4})\left\Vert x+y,z\right\Vert =\left\Vert x,z\right\Vert +\left\Vert
y,z\right\Vert ,$

\QTP{Body Math}
\textit{for all} $\alpha \in 
\mathbb{R}
$ \textit{and all} $x,$ $y,$ $z\in X.$

\begin{example}
Let $E_{3}$ denotes Euclidean vector three spaces. Let $x=ai+bj+ck$ and $%
y=di+ej+fk$ define%
\begin{eqnarray*}
\left\Vert x,y\right\Vert  &=&\left\vert x\times y\right\vert =abs\left\vert 
\begin{array}{ccc}
i & j & k \\ 
a & b & c \\ 
d & e & f%
\end{array}%
\right\vert  \\
&=&\left\vert \left( bf-ce\right) ^{2}i+\left( cd-af\right) ^{2}j+\left(
ae-db\right) ^{2}k\right\vert ^{\frac{1}{2}}
\end{eqnarray*}%
Then $\left( E_{3},\left\Vert .,.\right\Vert \right) $ is a 2-normed space
and this space is complete (for more information, see [6]).
\end{example}

Also

1) In addition to $2N_{1}),$ $2N_{2}),$ $2N_{3}),$ if there is a constant $%
K\geq 1$ such that 
\begin{equation*}
2N_{4}^{\ast })\text{ }\left\Vert x+y,z\right\Vert \leq K\left( \left\Vert
x,z\right\Vert +\left\Vert y,z\right\Vert \right) \text{ for all }x,\text{ }%
y,\text{ }z\in X
\end{equation*}

is called quasi 2-normed space.

2) A $2$-norm $\left\Vert .,.\right\Vert $ defined on a linear space $X$ is
said to be uniformly continuous in both variables if for any $\varepsilon >0$
there exist a neighbourhood $U_{\varepsilon }$ of $0$ such that $\left\vert
\left\Vert a,b\right\Vert -\left\Vert a%
{\acute{}}%
,b%
{\acute{}}%
\right\Vert \right\vert <\varepsilon $ whenever $a$-$a%
{\acute{}}%
$ and $b$-$b%
{\acute{}}%
$ are in $U_{\varepsilon },$ which is independent of the choice of $a$, $a%
{\acute{}}%
$, $b$, $b%
{\acute{}}%
.$

3) A pseudo $2$-norm is defined to be real-valued function having all the
properties of $2$-norm $\left\Vert .,.\right\Vert $ except the condition
that $\left\Vert a,b\right\Vert =0$ implies the linear dependence of $a$ and 
$b$ (for details, see [1]).

\begin{example}
Let $X$ be a linear space with $\dim X\geq 2$ and $\left\Vert .,.\right\Vert 
$ be $2$-norm on $X.$ 
\begin{equation*}
\left\Vert x,y\right\Vert _{q}=2\left\Vert x,y\right\Vert 
\end{equation*}%
is quasi $2$-norm on $X$ and $\left( X,\left\Vert .,.\right\Vert _{q}\right) 
$ is quasi $2$-normed space.
\end{example}

\begin{solution}
By using conditions $2N_{1}),$ $2N_{2}),$ $2N_{3})$ in $2$-normed spaces,
however, using $2N_{4}^{\ast })$, we show that as follows: 
\begin{equation*}
2N_{1})\left\Vert x,y\right\Vert _{q}=0\text{ if and only if }2\left\Vert
x,y\right\Vert =0,\text{ namely, }x\text{ and }y\text{ linearly dependent,}
\end{equation*}%
It is easy to see for $2N_{2}),$ that \noindent is, $\left\Vert
x,y\right\Vert _{q}=\left\Vert y,x\right\Vert _{q},$ Using property of $%
2N_{3}),$ we readily see the following applications: For all $\alpha \in 
\mathbb{R}
,$%
\begin{eqnarray*}
\left\Vert \alpha x,y\right\Vert _{q} &=&2\left\Vert \alpha x,y\right\Vert
=\left\vert \alpha \right\vert \left( 2\left\Vert x,y\right\Vert \right)  \\
&=&\left\vert \alpha \right\vert \left\Vert x,y\right\Vert _{q}
\end{eqnarray*}%
We are now ready to prove property of $2N_{4}^{\ast }),$ That is, For all $x$%
, $y$, $z\in X$%
\begin{eqnarray*}
\left\Vert x+y,z\right\Vert _{q} &=&2\left\Vert x+y,z\right\Vert  \\
&\leq &2\left( \left\Vert x,z\right\Vert +\left\Vert y,z\right\Vert \right) 
\\
&=&\left\Vert x,z\right\Vert _{q}+\left\Vert y,z\right\Vert _{q}
\end{eqnarray*}%
So, we complete solution of Example $2$.
\end{solution}

\begin{theorem}
Let $X$ be a linear space with $\dim X\geq 2$ and $\left\Vert .,.\right\Vert 
$ be $2$-norm on $X,$ for constant $a$, $b\in 
\mathbb{R}
$ which are $a\geq \frac{1}{2}$ and $b\geq \frac{1}{2}.$ There exists $%
\left\Vert x,y\right\Vert _{q}$ quasi $2$-norm on $X$ defined as%
\begin{equation*}
\left\Vert x,y\right\Vert _{q}=a\left\Vert x,y\right\Vert +b\left\Vert
x,y\right\Vert 
\end{equation*}
\end{theorem}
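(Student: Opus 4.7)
The key observation is that the proposed function simplifies to $\|x,y\|_q = (a+b)\|x,y\|$, where the hypotheses $a,b \geq \tfrac12$ ensure that the scalar $\lambda := a+b$ satisfies $\lambda \geq 1$. So the theorem reduces to verifying that any positive rescaling $\lambda\|\cdot,\cdot\|$ of a $2$-norm, with $\lambda\geq 1$, satisfies the four axioms $2N_1)$, $2N_2)$, $2N_3)$, $2N_4^{\ast})$ of a quasi $2$-norm. My plan is to check these axioms in order, relying entirely on the fact that $\|\cdot,\cdot\|$ is assumed to be a genuine $2$-norm.

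First, for $2N_1)$ I would use that $\lambda > 0$, so $\lambda\|x,y\|=0$ if and only if $\|x,y\|=0$, and then invoke $2N_1)$ for $\|\cdot,\cdot\|$ to conclude linear dependence of $x$ and $y$. For $2N_2)$ the symmetry is immediate from the symmetry of $\|\cdot,\cdot\|$, since both sides just multiply by the same constant $\lambda$. For $2N_3)$ I would pull out $|\alpha|$ using the absolute homogeneity of $\|\cdot,\cdot\|$:
\begin{equation*}
\|\alpha x,y\|_q = (a+b)\|\alpha x,y\| = |\alpha|\,(a+b)\|x,y\| = |\alpha|\,\|x,y\|_q.
\end{equation*}

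For the quasi triangle inequality $2N_4^{\ast})$ I would apply the genuine triangle inequality $2N_4)$ enjoyed by $\|\cdot,\cdot\|$:
\begin{equation*}
\|x+y,z\|_q = (a+b)\|x+y,z\| \leq (a+b)\bigl(\|x,z\|+\|y,z\|\bigr) = \|x,z\|_q+\|y,z\|_q,
\end{equation*}
so in fact the constant $K=1$ works, which is admissible since the definition of quasi $2$-norm only requires $K\geq 1$. I should also remark at the start that $a+b\geq 1$ follows from $a,b\geq \tfrac12$, which is what legitimizes calling the result a quasi $2$-norm rather than just a pseudo $2$-norm scaled down.

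There is no genuine obstacle in this proof; the only care needed is to notice that $\|x,y\|_q=(a+b)\|x,y\|$ collapses the two-constant expression into a single positive multiplier, and to record that the hypothesis $a,b\geq \tfrac12$ is used only to guarantee $a+b\geq 1$ (which is in any case more than what the quasi $2$-norm axioms demand, since $K=1$ suffices here). Thus the proof will be a short, direct verification of the four defining axioms.
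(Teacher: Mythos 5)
Your proposal is correct and takes essentially the same route as the paper: a direct verification of the axioms after collapsing $a\left\Vert x,y\right\Vert +b\left\Vert x,y\right\Vert$ to $\left( a+b\right) \left\Vert x,y\right\Vert$, with $2N_{1})$--$2N_{3})$ immediate and $2N_{4}^{\ast })$ following from the triangle inequality of the underlying $2$-norm. The only difference is a small sharpening on your part: you note that $\left\Vert x+y,z\right\Vert _{q}\leq \left\Vert x,z\right\Vert _{q}+\left\Vert y,z\right\Vert _{q}$ already holds, so $K=1$ suffices, whereas the paper stops at the bound $K\left( \left\Vert x,z\right\Vert +\left\Vert y,z\right\Vert \right)$ with $K=a+b$.
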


\begin{proof}
It is evident to show conditions $2N_{1}),$ $2N_{2})$ and $2N_{3}),$
Therefore, It is sufficient to prove condition of $2N_{4}^{\ast })$ as
follows: For all $x$, $y$, $z\in X$%
\begin{eqnarray*}
\left\Vert x+y,z\right\Vert _{q} &=&a\left\Vert x+y,z\right\Vert
+b\left\Vert x+y,z\right\Vert  \\
&\leq &a\left( \left\Vert x,z\right\Vert +\left\Vert y,z\right\Vert \right)
+b\left( \left\Vert x,z\right\Vert +\left\Vert y,z\right\Vert \right)  \\
&=&\left( a+b\right) \left\Vert x,z\right\Vert +\left( a+b\right) \left\Vert
y,z\right\Vert  \\
&=&K\left( \left\Vert x,z\right\Vert +\left\Vert y,z\right\Vert \right) 
\end{eqnarray*}

since there exists a constant $K\geq 1,$ namely, by substituting $K:=a+b,$
we show that $\left\Vert .,.\right\Vert _{q}$ is a quasi $2$-norm on $X.$
\end{proof}

\begin{definition}
Let $\left( X,\left\Vert .,.\right\Vert \right) $ be a quasi $2$-normed
space.
\end{definition}

\textit{a) A sequence} $\left\{ x_{n}\right\} $\textit{\ is a Cauchy
sequence in a linear quasi }$2$\textit{-normed space} $\left( X,\left\Vert
.,.\right\Vert \right) $\textit{\ if and only if} $\lim_{n,m\rightarrow
\infty }\left\Vert x_{n}-x_{m},z\right\Vert =0$ \textit{for every} $z$ 
\textit{in} $X.$

\textit{b)} \textit{A sequence} $\left\{ x_{n}\right\} $ \textit{in} $X$ 
\textit{is called a convergent sequence if there is an} $x\in X$ \textit{%
such that} $\lim_{n,m\rightarrow \infty }\left\Vert x_{n}-x,z\right\Vert =0$ 
\textit{for every} $z$\textit{\ in} $X.$

\textit{c)} \textit{A\ quasi} $2$-\textit{normed space in which every Cauchy
sequence converges is called complete}.

\begin{definition}
Let $\left( X,\left\Vert .,.\right\Vert _{X}\right) $ and $\left(
Y,\left\Vert .,.\right\Vert _{Y}\right) $ be quasi $2$-normed spaces.
\end{definition}

a%
\'{}%
)\textit{\ A mapping }$T:X\rightarrow Y$ \textit{is said to be isometric or
isometry if for all} $x,$ $y\in X$ 
\begin{equation*}
\left\Vert Tx,Ty\right\Vert _{Y}=\left\Vert x,y\right\Vert _{X}
\end{equation*}%
\ \ \ \ \ b%
\'{}%
) \textit{The space} $X$ \textit{is said to be isometry with the space }$Y$ 
\textit{if there exists a bijective isometry of }$X$ \textit{onto} $Y.$ 
\textit{The spaces} $X$ \textit{and} $Y$ \textit{are called isometric spaces.%
}

\begin{theorem}
If a sequence $\left\{ x_{n}\right\} $ is a Cauchy sequence in a linear $2$%
-normed space $\left( X,\left\Vert .,.\right\Vert \right) ,$ then $%
\lim_{n\rightarrow \infty }\left\Vert x_{n},z\right\Vert $ exists for every $%
z$ in $X$ (for proof, see [1]).
\end{theorem}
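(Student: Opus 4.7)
The plan is to show that the real sequence $\{\|x_n,z\|\}$ is itself Cauchy in $\mathbb{R}$, and then invoke the completeness of $\mathbb{R}$ to conclude that the limit exists. The bridge between the hypothesis on $\{x_n\}$ and the scalar sequence will be a reverse-triangle-type inequality for the 2-norm, namely
\begin{equation*}
\bigl|\,\|x_n,z\| - \|x_m,z\|\,\bigr| \leq \|x_n - x_m,z\|.
\end{equation*}

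First I would derive this inequality directly from the axioms $2N_1)$--$2N_4)$. Writing $x_n = (x_n - x_m) + x_m$ and applying $2N_4)$ gives $\|x_n,z\| \leq \|x_n-x_m,z\| + \|x_m,z\|$, hence $\|x_n,z\| - \|x_m,z\| \leq \|x_n-x_m,z\|$. Swapping the roles of $n$ and $m$ and using $2N_3)$ with $\alpha = -1$ (so that $\|x_m-x_n,z\| = \|x_n-x_m,z\|$) yields the reversed bound, and together they give the absolute-value inequality above.

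Next I would fix an arbitrary $z \in X$ and let $\varepsilon > 0$. Since $\{x_n\}$ is Cauchy in the 2-normed sense, there is an $N$ such that $\|x_n - x_m,z\| < \varepsilon$ whenever $n,m \geq N$. The displayed inequality then immediately gives $|\|x_n,z\| - \|x_m,z\|| < \varepsilon$ for all such $n,m$, so $\{\|x_n,z\|\}$ is a Cauchy sequence of nonnegative real numbers. By completeness of $\mathbb{R}$, $\lim_{n\to\infty}\|x_n,z\|$ exists, and since $z$ was arbitrary, the conclusion holds for every $z \in X$.

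I do not anticipate a serious obstacle: the only subtlety is recognising that the 2-norm obeys a reverse-triangle inequality exactly as an ordinary norm does, which in turn relies only on $2N_3)$ (to handle the sign change) and the full additive triangle inequality $2N_4)$. Note that this is the reason the theorem is stated for a genuine linear 2-normed space rather than a quasi 2-normed space: with $2N_4^{*})$ in place of $2N_4)$, one would only obtain $\|x_n,z\| - \|x_m,z\| \leq K\|x_n-x_m,z\| + (K-1)\|x_m,z\|$, and the boundedness of $\|x_m,z\|$ would have to be established separately before Cauchyness of the scalar sequence could be extracted.
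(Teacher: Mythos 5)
Your proof is correct and is the standard argument for this result; the paper itself gives no proof, deferring to reference [1], and the reverse-triangle-inequality-plus-completeness-of-$\mathbb{R}$ route you take is precisely the one found there. Your closing remark about why the argument degrades under $2N_{4}^{\ast})$ with constant $K>1$ is accurate and explains why the theorem is stated for genuine $2$-norms.
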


\begin{theorem}
If $X$ is a linear space having a uniformly continuous $2$-norm $\left\Vert
.,.\right\Vert $ defined on it, then for any two Cauchy sequences $\left\{
x_{n}\right\} $ and $\left\{ y_{n}\right\} $ in $X,$ $\lim_{n\rightarrow
\infty }\left\Vert x_{n},y_{n}\right\Vert $ (for proof, see [1]).
\end{theorem}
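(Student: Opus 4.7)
The natural strategy is to show that the real sequence $\{\|x_{n},y_{n}\|\}$ is itself a Cauchy sequence in $\mathbb{R}$, from which completeness of $\mathbb{R}$ delivers the existence of the limit. Uniform continuity of the $2$-norm is exactly the tool that converts ``$x_{n}-x_{m}$ and $y_{n}-y_{m}$ small'' into ``$\|x_{n},y_{n}\|-\|x_{m},y_{m}\|$ small,'' so the whole argument turns on bridging the sequential Cauchy hypothesis with the neighbourhood formulation of uniform continuity.

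First I would fix $\varepsilon>0$ and invoke the hypothesis: there is a neighbourhood $U_{\varepsilon}$ of $0\in X$ such that
\begin{equation*}
\bigl|\|a,b\|-\|a',b'\|\bigr|<\varepsilon\qquad\text{whenever }a-a',\, b-b'\in U_{\varepsilon}.
\end{equation*}
Next I would use the fact that in a linear $2$-normed space the natural $0$-neighbourhoods are given, up to finite intersection, by sets of the form $\{w\in X:\|w,z\|<\delta\}$ for some $z\in X$ and $\delta>0$, so that $U_{\varepsilon}$ contains such a basic set. Because $\{x_{n}\}$ and $\{y_{n}\}$ are Cauchy, the definition in part (a) gives $\|x_{n}-x_{m},z\|\to 0$ and $\|y_{n}-y_{m},z\|\to 0$ for every $z$; in particular, for the finitely many $z$'s determining $U_{\varepsilon}$ one can pick $N$ so large that $x_{n}-x_{m}\in U_{\varepsilon}$ and $y_{n}-y_{m}\in U_{\varepsilon}$ for all $n,m\geq N$.

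Combining the two steps, for all $n,m\geq N$,
\begin{equation*}
\bigl|\|x_{n},y_{n}\|-\|x_{m},y_{m}\|\bigr|<\varepsilon,
\end{equation*}
so $\{\|x_{n},y_{n}\|\}$ is Cauchy in $\mathbb{R}$ and therefore converges. The existence of $\lim_{n\to\infty}\|x_{n},y_{n}\|$ follows.

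The main obstacle is the second step: justifying that the neighbourhood-style hypothesis in the definition of uniform continuity really can be met by tail differences of sequences that are Cauchy in the $2$-norm sense (which is a countable family of seminorm conditions). Everything else is bookkeeping once one accepts that the $0$-neighbourhoods of $X$ are generated, in the appropriate sense, by the seminorms $\|\cdot,z\|$. If one is uneasy about this, the same argument can be run more directly via the triangle-type inequality
\begin{equation*}
\bigl|\|x_{n},y_{n}\|-\|x_{m},y_{m}\|\bigr|\leq \bigl|\|x_{n},y_{n}\|-\|x_{m},y_{n}\|\bigr|+\bigl|\|x_{m},y_{n}\|-\|x_{m},y_{m}\|\bigr|,
\end{equation*}
and then controlling each term using uniform continuity applied with one variable held fixed; this reformulation avoids any appeal to the ambient topology and is the fallback plan if the neighbourhood argument feels too implicit.
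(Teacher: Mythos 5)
The paper offers no proof of this theorem at all --- it simply refers the reader to [1] --- so there is no internal argument to compare yours against. Your proposal is the standard proof (and the one in [1]): show that $\{\Vert x_{n},y_{n}\Vert\}$ is a Cauchy sequence of reals by feeding the tail differences $x_{n}-x_{m}$ and $y_{n}-y_{m}$ into the uniform-continuity neighbourhood $U_{\varepsilon}$, then invoke completeness of $\mathbb{R}$. The argument is correct, and the one point you rightly single out --- passing from the sequential condition $\lim_{n,m}\Vert x_{n}-x_{m},z\Vert=0$ for every $z$ to the topological statement $x_{n}-x_{m}\in U_{\varepsilon}$ --- is legitimate precisely because the topology on a $2$-normed space is, by the standard (G\"{a}hler/Freese--Cho) convention, the locally convex topology generated by the seminorms $\Vert\cdot\,,z\Vert$; hence every $0$-neighbourhood contains a finite intersection of sets $\{w:\Vert w,z_{i}\Vert<\delta\}$, and the Cauchy hypothesis disposes of each of the finitely many $z_{i}$. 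One small caveat: your ``fallback'' via the splitting into $\vert\Vert x_{n},y_{n}\Vert-\Vert x_{m},y_{n}\Vert\vert$ and $\vert\Vert x_{m},y_{n}\Vert-\Vert x_{m},y_{m}\Vert\vert$ does not actually avoid the topological step, since controlling each term by uniform continuity still requires $x_{n}-x_{m}\in U_{\varepsilon}$ (respectively $y_{n}-y_{m}\in U_{\varepsilon}$); and the naive alternative bound $\vert\Vert x_{n},y_{n}\Vert-\Vert x_{m},y_{n}\Vert\vert\leq\Vert x_{n}-x_{m},y_{n}\Vert$ does not close the argument on its own because the second slot $y_{n}$ varies with $n$ --- which is exactly why uniform continuity is assumed in the first place.
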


\begin{definition}
Two Cauchy sequences $\left\{ x_{n}\right\} $ and $\left\{ y_{n}\right\} $
in a linear $2$-normed space $\left( X,\left\Vert .,.\right\Vert \right) $
are said to be equivalent, denoted by $\left\{ x_{n}\right\} \thicksim
\left\{ y_{n}\right\} ,$ if for every neighbourhood $U$ of $0$ there is an
integer $N\left( U\right) $ such that $n\geq N\left( U\right) $ implies that 
$x_{n}-y_{n}\in U$ cf. [1].
\end{definition}

\section{COMPLETION OF QUASI 2-NORMED SPACE}

In [2], C. Park introduced quasi $2$-normed spaces and gave some results on $%
p$-normed spaces. Also he introduced a question which was "Construct a
completion of a quasi -2-norm". In this section, we give an answer to this
question.

\begin{theorem}
The relation $\thicksim $ on the set of Cauchy sequences in $X$ is an
equivalence relation on $X.$
\end{theorem}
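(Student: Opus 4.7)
The plan is to verify the three defining properties of an equivalence relation — reflexivity, symmetry, and transitivity — in turn, relying only on the definition given and on elementary properties of neighborhoods of $0$ in the topology induced by the quasi $2$-norm.

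Reflexivity is immediate: for any neighborhood $U$ of $0$ we have $x_{n}-x_{n}=0\in U$ for every $n$, so $\{x_{n}\}\thicksim \{x_{n}\}$. For symmetry, suppose $\{x_{n}\}\thicksim \{y_{n}\}$ and let $V$ be a neighborhood of $0$; then $-V$ is also a neighborhood of $0$ (scalar multiplication by $-1$ is a homeomorphism), so applying the hypothesis with $-V$ in place of $U$ produces an integer $N$ such that $x_{n}-y_{n}\in -V$, equivalently $y_{n}-x_{n}\in V$, for all $n\geq N$.

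The main step is transitivity. Given $\{x_{n}\}\thicksim \{y_{n}\}$ and $\{y_{n}\}\thicksim \{z_{n}\}$, and a neighborhood $W$ of $0$, I would decompose
\begin{equation*}
x_{n}-z_{n}=(x_{n}-y_{n})+(y_{n}-z_{n})
\end{equation*}
and then use the quasi $2$-norm inequality $2N_{4}^{\ast })$ to produce neighborhoods $U,V$ of $0$ with $U+V\subseteq W$. Concretely, for a basic neighborhood of the form $W_{z}=\{u:\left\Vert u,z\right\Vert <\varepsilon \}$, the choice $U=V=\{u:\left\Vert u,z\right\Vert <\varepsilon /(2K)\}$ works, because if $u\in U$ and $v\in V$ then $\left\Vert u+v,z\right\Vert \leq K(\left\Vert u,z\right\Vert +\left\Vert v,z\right\Vert )<\varepsilon $ by $2N_{4}^{\ast })$. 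Setting $N=\max \{N_{1}(U),N_{2}(V)\}$ from the two equivalence hypotheses then yields $x_{n}-z_{n}\in U+V\subseteq W$ for all $n\geq N$.

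The only real subtlety — and the place where the quasi structure differs from the classical case — is the intrusion of the constant $K\geq 1$ in the transitivity step; absorbing this factor by halving the radius to $\varepsilon /(2K)$ rather than $\varepsilon /2$ reduces the argument to the familiar pattern and the three properties all fall out routinely. I do not expect any serious obstacle beyond this bookkeeping.
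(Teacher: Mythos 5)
Your proof is correct, and its engine --- the decomposition $x_{n}-z_{n}=(x_{n}-y_{n})+(y_{n}-z_{n})$ followed by the quasi-triangle inequality $2N_{4}^{\ast })$ with the constant $K$ absorbed --- is exactly the engine of the paper's proof. The difference is one of formalism: the paper passes immediately to the limit formulation and computes $\lim_{n\rightarrow \infty }\left\Vert x_{n}-z_{n},z\right\Vert \leq K(0+0)=0$, which tacitly invokes the equivalence between the neighbourhood definition of $\thicksim $ and the vanishing of these limits (the paper's Theorem 5, quoted from [1] and, somewhat awkwardly, stated only \emph{after} this theorem). You instead work directly from the neighbourhood definition, producing $U=V=\{u:\left\Vert u,z\right\Vert <\varepsilon /(2K)\}$ inside a basic neighbourhood of radius $\varepsilon $; this is marginally more self-contained, since it does not presuppose that limit characterization, at the cost of having to specify what the basic neighbourhoods of $0$ are. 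Your treatments of reflexivity and symmetry are equally routine (the paper simply declares them clear), and your observation that the only genuine novelty over the classical normed-space argument is bookkeeping for the factor $K$ is exactly right.
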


\begin{proof}
It is clear that $\left\{ x_{n}\right\} \thicksim \left\{ y_{n}\right\} $ $%
\left( \text{reflexivity}\right) $ and $\left\{ y_{n}\right\} \thicksim
\left\{ x_{n}\right\} $ when $\left\{ x_{n}\right\} \thicksim \left\{
y_{n}\right\} $ $\left( \text{symmetry}\right) .$

Let $\left\{ x_{n}\right\} \thicksim \left\{ y_{n}\right\} $ and $\left\{
y_{n}\right\} \thicksim \left\{ z_{n}\right\} ,$ $z\in X$%
\begin{eqnarray*}
\lim_{n\rightarrow \infty }\left\Vert x_{n}-z_{n},z\right\Vert
&=&\lim_{n\rightarrow \infty }\left\Vert x_{n}-y_{n}+y_{n}-z_{n},z\right\Vert
\\
&\leq &\lim_{n\rightarrow \infty }K\left( \left\Vert
x_{n}-y_{n},z\right\Vert +\left\Vert y_{n}-z_{n},z\right\Vert \right) ,\text{
}K\geq 1 \\
&\leq &K\lim_{n\rightarrow \infty }\left\Vert x_{n}-y_{n},z\right\Vert
+\left\Vert y_{n}-z_{n},z\right\Vert \\
&=&K\left( 0+0\right) =0.
\end{eqnarray*}

Then $\left\{ x_{n}\right\} \thicksim \left\{ z_{n}\right\} $ $\left( \text{%
transitivity}\right) .$ So $\thicksim $ is a equivalence relation on $X.$
\end{proof}

\begin{theorem}
$\left\{ x_{n}\right\} $ is equivalent to $\left\{ a_{n}\right\} $ in a
linear $2$-normed space $\left( X,\left\Vert .,.\right\Vert \right) $ if and
only if 
\begin{equation*}
\lim_{n\rightarrow \infty }\left\Vert x_{n}-a_{n},z\right\Vert =0
\end{equation*}%
for every $z$ in $X$ (for proof, see\ [1]).
\end{theorem}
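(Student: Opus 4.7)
The plan is to unpack the topology that underlies the neighbourhood condition in the definition of $\thicksim$ and then match it up with the stated pointwise convergence condition. In a linear $2$-normed space, the natural $0$-neighbourhood base consists of sets of the form
\[
U(z_1,\dots,z_k;\varepsilon)=\bigl\{x\in X:\|x,z_i\|<\varepsilon\text{ for }i=1,\dots,k\bigr\},
\]
with finitely many $z_i\in X$ and $\varepsilon>0$; this is just the standard seminorm-type base associated with the family $\{\|\cdot,z\|\}_{z\in X}$. I would open the proof by recording this fact, since everything else is a translation through it.

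For the forward implication, I would fix an arbitrary $z\in X$ and $\varepsilon>0$, form the single basic neighbourhood $U=\{x:\|x,z\|<\varepsilon\}$, and invoke the definition of $\{x_n\}\thicksim\{a_n\}$ to get an $N=N(U)$ with $x_n-a_n\in U$, i.e.\ $\|x_n-a_n,z\|<\varepsilon$, for all $n\geq N$. Since $z$ and $\varepsilon$ were arbitrary, this gives $\lim_{n\to\infty}\|x_n-a_n,z\|=0$ for every $z\in X$.

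For the converse, I would start with an arbitrary neighbourhood $U$ of $0$, shrink it to a basic neighbourhood $U(z_1,\dots,z_k;\varepsilon)\subseteq U$, and apply the hypothesis to each $z_i$ to obtain integers $N_i$ with $\|x_n-a_n,z_i\|<\varepsilon$ for all $n\geq N_i$. Setting $N(U)=\max_{1\leq i\leq k}N_i$ yields $x_n-a_n\in U(z_1,\dots,z_k;\varepsilon)\subseteq U$ for all $n\geq N(U)$, which is exactly the definition of $\{x_n\}\thicksim\{a_n\}$.

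The only genuinely delicate point is the first step: one must justify that the neighbourhood base of $0$ really is given by finite intersections of sets $\{x:\|x,z_i\|<\varepsilon\}$. If the reader is willing to take the seminorm topology as the definition, the rest is routine; otherwise I would briefly note that condition $2N_4^\ast$ (together with $2N_2$ and $2N_3$) ensures that each $\|\cdot,z\|$ is a continuous pseudo-norm, so these sets do form a $0$-neighbourhood subbase and the equivalences above go through without further complication.
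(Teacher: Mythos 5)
The paper gives no proof of this theorem, deferring instead to reference [1], so there is nothing in-paper to compare against; your argument is the standard one that [1] has in mind and it is correct. You rightly isolate the only real issue --- that the paper's definition of $\thicksim$ speaks of ``every neighbourhood $U$ of $0$'' without ever specifying a topology on $X$ --- and your resolution (take the locally convex topology with subbase $\{x:\|x,z\|<\varepsilon\}$, use a single subbasic set for the forward direction and a finite intersection for the converse) is exactly what is needed; note only that since the theorem is stated for a genuine linear $2$-normed space, each $\|\cdot,z\|$ is already a seminorm by $2N_3$ and $2N_4$, so no appeal to the quasi-norm condition $2N_4^{\ast}$ is required.
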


\begin{theorem}
If $\left\{ a_{n}\right\} $ and $\left\{ b_{n}\right\} $ are equivalent to $%
\left\{ x_{n}\right\} $ and $\left\{ y_{n}\right\} $ in a linear 2-normed
space $\left( X,\left\Vert .,.\right\Vert \right) ,$ respectively, then $%
\left\{ a_{n}+b_{n}\right\} $ is equivalent to $\left\{ x_{n}+y_{n}\right\} $
and $\left\{ \alpha a_{n}\right\} $ is equivalent to $\left\{ \alpha
x_{n}\right\} $ (for proof, see [1]).
\end{theorem}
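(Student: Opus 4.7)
My plan is to reduce the equivalence statement to the limit characterization given by the previous theorem (Theorem~4) and then exploit the bilinear structure of the $2$-norm together with the quasi-triangle inequality $2N_{4}^{\ast})$. Throughout I will fix an arbitrary $z\in X$ and work with the constant $K\geq 1$ appearing in $2N_{4}^{\ast})$; in the genuinely $2$-normed case we simply take $K=1$.

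First I would record what the hypotheses supply. By Theorem~4, the assumption $\{a_{n}\}\thicksim\{x_{n}\}$ is equivalent to $\lim_{n\to\infty}\Vert a_{n}-x_{n},z\Vert=0$ for every $z\in X$, and similarly $\lim_{n\to\infty}\Vert b_{n}-y_{n},z\Vert=0$. Before invoking the definition of $\thicksim$, I need to check that $\{a_{n}+b_{n}\}$ and $\{x_{n}+y_{n}\}$ really are Cauchy sequences, so that the relation applies to them; this follows from
\begin{equation*}
\Vert (a_{n}+b_{n})-(a_{m}+b_{m}),z\Vert \leq K\bigl(\Vert a_{n}-a_{m},z\Vert +\Vert b_{n}-b_{m},z\Vert\bigr),
\end{equation*}
and analogously for $\{x_{n}+y_{n}\}$, each summand tending to $0$ because $\{a_{n}\},\{b_{n}\},\{x_{n}\},\{y_{n}\}$ are Cauchy.

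Next I would estimate the desired difference using $2N_{4}^{\ast})$, writing
\begin{equation*}
\Vert (a_{n}+b_{n})-(x_{n}+y_{n}),z\Vert = \Vert (a_{n}-x_{n})+(b_{n}-y_{n}),z\Vert \leq K\bigl(\Vert a_{n}-x_{n},z\Vert +\Vert b_{n}-y_{n},z\Vert\bigr).
\end{equation*}
Taking $n\to\infty$ and using the two limit statements above, the right-hand side tends to $0$. Hence $\lim_{n\to\infty}\Vert (a_{n}+b_{n})-(x_{n}+y_{n}),z\Vert=0$ for every $z\in X$, and Theorem~4 (applied in the reverse direction) yields $\{a_{n}+b_{n}\}\thicksim\{x_{n}+y_{n}\}$.

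For the scalar claim the argument is even shorter: property $2N_{3})$ gives
\begin{equation*}
\Vert \alpha a_{n}-\alpha x_{n},z\Vert = |\alpha|\,\Vert a_{n}-x_{n},z\Vert \xrightarrow[n\to\infty]{} 0,
\end{equation*}
and after verifying that $\{\alpha a_{n}\}$ and $\{\alpha x_{n}\}$ are Cauchy (immediate from the same homogeneity), one concludes $\{\alpha a_{n}\}\thicksim\{\alpha x_{n}\}$ by Theorem~4. I do not anticipate any real obstacle here; the only delicate point is being careful that $K$ in the quasi-triangle inequality is a fixed constant and so commutes with the limit, which is what makes the estimate collapse cleanly to $0$.
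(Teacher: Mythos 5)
Your argument is correct and complete: reducing equivalence to the limit criterion, checking that the combined sequences are still Cauchy, and then applying the (quasi-)triangle inequality and homogeneity is exactly the standard route. Note that the paper itself gives no proof of this statement --- it simply refers the reader to [1] --- so there is nothing internal to compare against; also, the limit characterization you invoke is the theorem numbered 5 in this paper (the one immediately preceding the statement), not Theorem 4, which is the claim that $\thicksim$ is an equivalence relation. That mislabeling aside, no step is missing.
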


Denote by $\widehat{X}$ the set of all equivalence classes of Cauchy
sequences in $X.$ Let $\widehat{x},$ $\widehat{y}$, $\widehat{z}$, etc.,
denote the elements of $\widehat{X}.$ Define an addition and scalar
multiplication on $\widehat{X}$ $\ $as follows:

\dag\ $\widehat{x}+\widehat{y}=$ the set of sequences of equivalent to $%
\left\{ x_{n}+y_{n}\right\} ,$ where $\left\{ x_{n}\right\} $ is in $%
\widehat{x}$ and $\left\{ y_{n}\right\} $ in $\widehat{y},$ and

\ddag $\alpha \widehat{x}=$ the set of sequences equivalent to $\left\{
\alpha x_{n}\right\} ,$ where $\left\{ x_{n}\right\} $ is in $\widehat{x}.$
It is clear that these two operations are well defined since they are
independent of the choice of elements from $\widehat{x}$ and $\widehat{y}.$
So $\widehat{X}$ is a linear space with operations.

\begin{theorem}
If $X$ is linear space having a uniformly continuous $2$-norm $\left\Vert
.,.\right\Vert $ defined on it, then for pairs of equivalent Cauchy
sequences and $\left\{ x_{n}\right\} \thicksim \left\{ a_{n}\right\} $ and $%
\left\{ y_{n}\right\} \thicksim \left\{ b_{n}\right\} $, Then%
\begin{equation*}
\lim_{n\rightarrow \infty }\left\Vert x_{n},y_{n}\right\Vert =\left\Vert
a_{n},b_{n}\right\Vert
\end{equation*}
\end{theorem}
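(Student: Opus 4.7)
The plan is to read the claimed equality as $\lim_{n\to\infty}\|x_n,y_n\|=\lim_{n\to\infty}\|a_n,b_n\|$ (the statement as written is missing a limit on the right) and to deduce it from uniform continuity of the 2-norm together with the equivalence hypothesis. The existence of both limits is already furnished by Theorem 3 applied to the Cauchy pairs $(\{x_n\},\{y_n\})$ and $(\{a_n\},\{b_n\})$, so it will suffice to show the two limits coincide by bounding $\bigl|\|x_n,y_n\|-\|a_n,b_n\|\bigr|$ for large $n$.

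First I would fix $\varepsilon>0$ and invoke the uniform continuity of $\|.,.\|$: there exists a neighbourhood $U_\varepsilon$ of $0$ in $X$ such that $\bigl|\|u,v\|-\|u',v'\|\bigr|<\varepsilon$ whenever $u-u'\in U_\varepsilon$ and $v-v'\in U_\varepsilon$, and the key point is that this neighbourhood is independent of the particular points involved. Next, I would use Theorem 5 together with the definition of equivalence: since $\{x_n\}\sim\{a_n\}$ and $\{y_n\}\sim\{b_n\}$, there exist integers $N_1(U_\varepsilon)$ and $N_2(U_\varepsilon)$ such that $x_n-a_n\in U_\varepsilon$ for $n\geq N_1$ and $y_n-b_n\in U_\varepsilon$ for $n\geq N_2$. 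Setting $N=\max\{N_1,N_2\}$, the uniform continuity inequality applied with $(u,v)=(x_n,y_n)$ and $(u',v')=(a_n,b_n)$ gives
\[
\bigl|\|x_n,y_n\|-\|a_n,b_n\|\bigr|<\varepsilon\qquad\text{for every }n\geq N.
\]

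Finally I would pass to the limit: letting $n\to\infty$ in the above inequality and using that both sequences of 2-norms converge (by Theorem 3), I obtain
\[
\bigl|\lim_{n\to\infty}\|x_n,y_n\|-\lim_{n\to\infty}\|a_n,b_n\|\bigr|\leq\varepsilon,
\]
and since $\varepsilon>0$ was arbitrary, the two limits must be equal. The only subtle step is making sure the neighbourhood $U_\varepsilon$ from uniform continuity can be used simultaneously in both slots of the 2-norm; this is exactly the content of the phrase in Definition 2 that $U_\varepsilon$ is ``independent of the choice of $a$, $a'$, $b$, $b'$'', so no further work is required beyond invoking the definition carefully. I do not expect any genuine obstacle; the main care needed is simply to line up the quantifiers of uniform continuity with those produced by the two equivalences.
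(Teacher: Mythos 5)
The paper states this theorem without any proof at all --- unlike the neighbouring results it does not even carry a ``for proof, see [1]'' tag --- so there is no in-paper argument to compare yours against; you are filling a genuine omission. Your argument is correct and is the natural one: take $U_{\varepsilon}$ from the uniform-continuity definition, use the definition of equivalence to get $N$ with $x_{n}-a_{n}\in U_{\varepsilon}$ and $y_{n}-b_{n}\in U_{\varepsilon}$ for $n\geq N$, conclude $\left\vert \left\Vert x_{n},y_{n}\right\Vert -\left\Vert a_{n},b_{n}\right\Vert \right\vert <\varepsilon$, and pass to the limits whose existence is guaranteed by Theorem 3. Two small remarks. First, your appeal to Theorem 5 is superfluous: the definition of equivalence already hands you $x_{n}-a_{n}\in U_{\varepsilon}$ for $n\geq N(U_{\varepsilon})$ directly, which is the only fact you use, whereas Theorem 5 converts equivalence into the limit condition $\lim_{n\rightarrow \infty }\left\Vert x_{n}-a_{n},z\right\Vert =0$, a form you never need. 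Second, you are right to read the displayed conclusion as $\lim_{n\rightarrow \infty }\left\Vert x_{n},y_{n}\right\Vert =\lim_{n\rightarrow \infty }\left\Vert a_{n},b_{n}\right\Vert$ (the right-hand limit is missing in the source); that is the form the paper actually relies on later when it asserts that $\left\Vert \widehat{x},\widehat{y}\right\Vert =\lim_{n\rightarrow \infty }\left\Vert x_{n},y_{n}\right\Vert$ is well defined on $\widehat{X}$, independent of the representatives chosen.
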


\begin{theorem}
If $\left\{ x_{n}\right\} $ and $\left\{ y_{n}\right\} $ are Cauchy
sequences in a linear 2-normed space $\left( X,\left\Vert .,.\right\Vert
\right) $, then $\left\{ x_{n}-y_{n}\right\} $ is a Cauchy sequence in $X.$
\end{theorem}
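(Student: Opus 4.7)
The plan is to show directly that $\{x_n - y_n\}$ satisfies the Cauchy condition of Definition~3(a), namely that for every $z \in X$,
\[
\lim_{n,m \to \infty} \|(x_n - y_n) - (x_m - y_m), z\| = 0.
\]
The natural move is to rearrange the difference as $(x_n - y_n) - (x_m - y_m) = (x_n - x_m) + (y_m - y_n)$ and then apply the quasi triangle inequality $2N_4^{\ast}$.

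First I would fix $z \in X$ and apply $2N_4^{\ast}$ to the rearranged expression, obtaining
\[
\|(x_n - x_m) + (y_m - y_n), z\| \le K\bigl(\|x_n - x_m, z\| + \|y_m - y_n, z\|\bigr),
\]
where $K \ge 1$ is the quasi 2-norm constant. Next, using $2N_3$ with scalar $-1$, I would observe $\|y_m - y_n, z\| = \|y_n - y_m, z\|$, so the upper bound becomes $K(\|x_n - x_m, z\| + \|y_n - y_m, z\|)$.

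Finally, since $\{x_n\}$ and $\{y_n\}$ are Cauchy in $(X, \|.,.\|)$, both $\|x_n - x_m, z\| \to 0$ and $\|y_n - y_m, z\| \to 0$ as $n, m \to \infty$; because $K$ is a fixed constant, the right-hand side tends to $0$, and therefore so does the left-hand side. Since $z \in X$ was arbitrary, this verifies Definition~3(a) for $\{x_n - y_n\}$.

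There is really no substantial obstacle here: the statement is a one-step consequence of the quasi triangle inequality combined with the symmetry provided by $2N_3$. The only thing to be careful about is keeping track of the constant $K$ (it does not cause trouble because it is fixed independently of $n, m$) and the sign flip when passing from $y_m - y_n$ to $y_n - y_m$. Under the circumstances, I would write the argument compactly as a single chain of inequalities similar in style to the transitivity calculation in Theorem~2.
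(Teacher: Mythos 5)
Your proposal is correct and follows essentially the same route as the paper: the same regrouping of $(x_n-y_n)-(x_m-y_m)$ into differences of the individual sequences, followed by the quasi triangle inequality $2N_4^{\ast}$ and the observation that the fixed constant $K$ does not affect the limit. You are in fact slightly more careful than the paper in explicitly invoking $2N_3$ to handle the sign flip and in stating the double limit $n,m\rightarrow\infty$ correctly, where the paper writes only $n\rightarrow\infty$.
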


\begin{proof}
We see that as follows:%
\begin{eqnarray*}
\left\Vert \left( x_{n}-y_{n}\right) -\left( x_{m}-y_{m}\right)
,z\right\Vert &=&\left\Vert \left( x_{n}-x_{m}\right) -\left(
y_{n}-y_{m}\right) ,z\right\Vert \\
&\leq &K\left( \left\Vert x_{n}-x_{m},z\right\Vert +\left\Vert
y_{n}-y_{m},z\right\Vert \right)
\end{eqnarray*}

we can readily see that, when $n\rightarrow \infty ,$ $\left\{
x_{n}-y_{n}\right\} $ is a Cauchy sequence in $X.$
\end{proof}

Whenever $X$ is a space having a uniformly continuous $2$-norm defined it
which is possible to define real-valued function on the space $\widehat{X}.$
The function is defined as follows:

For any two elements $\widehat{x}$ and $\widehat{y}$ in $\widehat{X},$%
\begin{equation*}
\left\Vert \widehat{x},\widehat{y}\right\Vert =\lim_{n\rightarrow \infty
}\left\Vert x_{n},y_{n}\right\Vert ,
\end{equation*}

Where $\left\{ x_{n}\right\} \in $ $\widehat{x}$ and $\left\{ y_{n}\right\}
\in \widehat{y}.$

Since the limit is exist and independent of the choice of the elements in $%
\widehat{x}$ and $\widehat{y}$. \ The function is well defined.

\begin{theorem}
If $X$ is a linear space having a uniformly continuous $2$-norm $\left\Vert
.,.\right\Vert $ defined on it and $\left\{ x_{n}\right\} $ and $\left\{
y_{n}\right\} $ are Cauchy sequences in $\widehat{x}$ and $\widehat{y}$,
respectively, then the function defined by 
\begin{equation*}
\left\Vert \widehat{x},\widehat{y}\right\Vert =\lim_{n\rightarrow \infty
}\left\Vert x_{n},y_{n}\right\Vert
\end{equation*}%
is a pseudo quasi $2$-norm on $\widehat{X}.$
\end{theorem}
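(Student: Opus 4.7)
The plan is to reduce each quasi $2$-norm axiom on $\widehat{X}$ to the corresponding axiom on $X$ by passing to the limit. The preceding theorems have already done the heavy lifting: Theorem~3 (existence of $\lim_{n\to\infty}\|x_n,y_n\|$ for pairs of Cauchy sequences) together with Theorem~7 (independence of the limit from the choice of representatives) guarantees that $\|\widehat{x},\widehat{y}\|$ is a well-defined nonnegative real number. I would open the proof by reminding the reader of this fact and then verify axioms $2N_{2})$, $2N_{3})$, and $2N_{4}^{\ast})$ in turn.

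For $2N_{2})$, I would pick any $\{x_n\}\in\widehat{x}$ and $\{y_n\}\in\widehat{y}$ and simply write
\begin{equation*}
\|\widehat{x},\widehat{y}\| = \lim_{n\to\infty}\|x_n,y_n\| = \lim_{n\to\infty}\|y_n,x_n\| = \|\widehat{y},\widehat{x}\|,
\end{equation*}
using symmetry of the $2$-norm on $X$. For $2N_{3})$, the operation $\alpha\widehat{x}$ was defined as the class of $\{\alpha x_n\}$, so
\begin{equation*}
\|\alpha\widehat{x},\widehat{y}\| = \lim_{n\to\infty}\|\alpha x_n,y_n\| = |\alpha|\lim_{n\to\infty}\|x_n,y_n\| = |\alpha|\,\|\widehat{x},\widehat{y}\|.
\end{equation*}
For $2N_{4}^{\ast})$, the definition of $\widehat{x}+\widehat{y}$ places $\{x_n+y_n\}$ in the class; applying $2N_{4})$ on $X$ at each index and then passing to the limit yields
\begin{equation*}
\|\widehat{x}+\widehat{y},\widehat{z}\| = \lim_{n\to\infty}\|x_n+y_n,z_n\| \leq \lim_{n\to\infty}\bigl(\|x_n,z_n\|+\|y_n,z_n\|\bigr) = \|\widehat{x},\widehat{z}\|+\|\widehat{y},\widehat{z}\|,
\end{equation*}
which is $2N_{4}^{\ast})$ with constant $K=1\geq 1$.

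The reason the conclusion is only a \emph{pseudo} quasi $2$-norm, and not a quasi $2$-norm, is that the implication $\|\widehat{x},\widehat{y}\|=0 \Rightarrow \widehat{x},\widehat{y}$ linearly dependent can fail: two Cauchy sequences $\{x_n\},\{y_n\}$ can produce $\lim\|x_n,y_n\|=0$ without any pair $x_n,y_n$, nor the classes themselves, being linearly dependent. I would mention this explicitly at the end of the proof to justify the word \emph{pseudo} and to motivate the standard next step in the completion construction, namely quotienting $\widehat{X}$ by the subspace $\{\widehat{x}:\|\widehat{x},\widehat{y}\|=0 \text{ for all }\widehat{y}\}$.

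The only real subtlety — and thus the main obstacle I would flag in the writeup — is that every step silently relies on the uniform continuity hypothesis on $\|.,.\|$: without it, the limit defining $\|\widehat{x},\widehat{y}\|$ need neither exist nor be representative-independent, and the interchange of limit with the inequalities above is no longer automatic. Apart from this conceptual point, each verification reduces to a one-line passage to the limit.
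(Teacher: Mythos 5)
Your proposal follows essentially the same route as the paper: well-definedness from the earlier limit-existence and representative-independence theorems, then verification of $2N_{2})$, $2N_{3})$, and $2N_{4}^{\ast })$ by passing to the limit termwise. Those three verifications are correct (your version of $2N_{4}^{\ast })$ with $K=1$ is in fact slightly cleaner than the paper's, which invokes a quasi-norm constant $K$ even though the hypothesis puts a genuine $2$-norm on $X$). The one genuine omission is the half of $2N_{1})$ that a \emph{pseudo} $2$-norm must still satisfy. By the paper's own definition, a pseudo $2$-norm drops only the implication ``$\left\Vert a,b\right\Vert =0$ implies $a,b$ linearly dependent''; the converse implication --- linear dependence of $\widehat{x}$ and $\widehat{y}$ forces $\left\Vert \widehat{x},\widehat{y}\right\Vert =0$ --- remains one of the axioms to be checked, and you explicitly exclude it from your list. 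The paper does check it: writing $\widehat{x}=\alpha \widehat{y}$ and choosing representatives $\left\{ \alpha y_{n}\right\} $ and $\left\{ y_{n}\right\} $, one gets $\left\Vert \widehat{x},\widehat{y}\right\Vert =\lim_{n\rightarrow \infty }\left\vert \alpha \right\vert \left\Vert y_{n},y_{n}\right\Vert =0$ since each term vanishes by $2N_{1})$ on $X$. This is a one-line fix, but without it the claimed conclusion is not fully established. Your closing discussion of \emph{why} the zero-implies-dependence direction fails, and of the subsequent quotient construction, is a useful addition that the paper does not make explicit.
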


\begin{proof}
By using definition of $2$-normed spaces, we see that,

$2N_{1})$ Let $\widehat{x}=\alpha \widehat{y}$%
\begin{eqnarray*}
\left\Vert \widehat{x},\widehat{y}\right\Vert &=&\left\Vert \alpha \widehat{y%
},\widehat{y}\right\Vert \\
&=&\lim_{n\rightarrow \infty }\left( \left\vert \alpha \right\vert
\left\Vert x_{n},y_{n}\right\Vert \right) \\
&=&0.
\end{eqnarray*}

$2N_{2})$ It is easy to see as follows:%
\begin{eqnarray*}
\left\Vert \widehat{x},\widehat{y}\right\Vert &=&\lim_{n\rightarrow \infty
}\left\Vert x_{n},y_{n}\right\Vert \\
&=&\lim_{n\rightarrow \infty }\left\Vert y_{n},x_{n}\right\Vert \\
&=&\left\Vert \widehat{y},\widehat{x}\right\Vert
\end{eqnarray*}

$2N_{3})$ On account of definition of $2$-normed space, that is,%
\begin{eqnarray*}
\left\Vert \alpha \widehat{x},\widehat{y}\right\Vert &=&\lim_{n\rightarrow
\infty }\left\Vert \alpha x_{n},y_{n}\right\Vert =\lim_{n\rightarrow \infty
}\left\vert \alpha \right\vert \left\Vert x_{n},y_{n}\right\Vert \\
&=&\left\vert \alpha \right\vert \lim_{n\rightarrow \infty }\left\Vert
x_{n},y_{n}\right\Vert \\
&=&\left\vert \alpha \right\vert \left\Vert \widehat{x},\widehat{y}%
\right\Vert .
\end{eqnarray*}

$2N_{4}^{\ast })$ $\widehat{x},$ $\widehat{y},$ $\widehat{z}\in \widehat{X},$
$\left\{ x_{n}\right\} \in \widehat{x},$ $\left\{ y_{n}\right\} \in \widehat{%
y},$ $\left\{ z_{n}\right\} \in \widehat{z}$ are Cauchy sequences%
\begin{eqnarray*}
\left\Vert \widehat{x},\widehat{y}+\widehat{z}\right\Vert
&=&\lim_{n\rightarrow \infty }\left\Vert x_{n},y_{n}+z_{n}\right\Vert \\
&\leq &\lim_{n\rightarrow \infty }\left( K\left( \left\Vert
x_{n},y_{n}\right\Vert +\left\Vert x_{n},z_{n}\right\Vert \right) \right) \\
&=&K\lim_{n\rightarrow \infty }\left\Vert x_{n},y_{n}\right\Vert
+K\lim_{n\rightarrow \infty }\left\Vert x_{n},z_{n}\right\Vert \\
&=&K\left( \left\Vert \widehat{x},\widehat{y}\right\Vert +\left\Vert 
\widehat{x},\widehat{z}\right\Vert \right)
\end{eqnarray*}

This shows that $\left\Vert \widehat{x},\widehat{y}\right\Vert
=\lim_{n\rightarrow \infty }\left\Vert x_{n},y_{n}\right\Vert $ is a pseudo
quasi $2$-norm on $\widehat{X}.$
\end{proof}

Let $\widehat{X}$ be the subset of $X$ consisting of those equivalence
classes which contain a Cauchy sequence $\left\{ x_{n}\right\} $ for which $%
x_{1}=x_{2}=...=x_{n}=\cdots $. At most one sequence of this kind can be in
each equivalence class. If $\widehat{x}$ and $\widehat{y}$ are in $\widehat{X%
}_{0}$ and if corresponding Cauchy sequence are $\left\{ x_{n}\right\} $ and 
$\left\{ y_{n}\right\} $ with $x_{n}=x$ and $y_{n}=y$ for every $n$, then we
have%
\begin{equation*}
\left\Vert \widehat{x},\widehat{y}\right\Vert =\lim_{n\rightarrow \infty
}\left\Vert x_{n},y_{n}\right\Vert =\left\Vert x,y\right\Vert .
\end{equation*}

Thus $\widehat{X}_{0}$ and $\widehat{X}$ are isometrics. This isometry will
be used to show that $\widehat{X}_{0}$ is dense in $\widehat{X}$ (for
details, see[1]).

\begin{theorem}
If $X$ is a linear space having a uniformly continuous quasi $2$-norm $%
\left\Vert .,.\right\Vert $ defined on it, $\overline{\left( \widehat{X}%
_{0}\right) }=\widehat{X}$ (for proof, see [1]).
\end{theorem}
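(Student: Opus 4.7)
The strategy is to produce, for each $\widehat{x} \in \widehat{X}$, an explicit sequence in $\widehat{X}_{0}$ that converges to $\widehat{x}$ in the pseudo quasi $2$-norm on $\widehat{X}$. Fix $\widehat{x} \in \widehat{X}$ and choose a representing Cauchy sequence $\{x_{n}\} \in \widehat{x}$. For each $k \geq 1$, let $\widehat{\xi}_{k}$ denote the equivalence class of the constant Cauchy sequence $(x_{k}, x_{k}, x_{k}, \dots)$, so that $\widehat{\xi}_{k} \in \widehat{X}_{0}$ by the definition of $\widehat{X}_{0}$. The goal then reduces to showing that
\begin{equation*}
\lim_{k \to \infty} \bigl\| \widehat{\xi}_{k} - \widehat{x},\, \widehat{z} \bigr\| = 0 \qquad \text{for every } \widehat{z} \in \widehat{X}.
\end{equation*}

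First I would unwind the left-hand side. By the results on addition of equivalence classes, $\widehat{\xi}_{k} - \widehat{x}$ is represented by the sequence $\{x_{k} - x_{n}\}_{n=1}^{\infty}$, which is Cauchy because $\{x_{n}\}$ is. If $\{z_{n}\} \in \widehat{z}$ is any representing Cauchy sequence, the definition of the pseudo quasi $2$-norm on $\widehat{X}$ gives
\begin{equation*}
\bigl\| \widehat{\xi}_{k} - \widehat{x},\, \widehat{z} \bigr\| = \lim_{n \to \infty} \bigl\| x_{k} - x_{n},\, z_{n} \bigr\|.
\end{equation*}

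Next I would estimate $\|x_{k} - x_{n}, z_{n}\|$ using uniform continuity of the quasi $2$-norm. Given $\varepsilon > 0$, pick a neighbourhood $U_{\varepsilon}$ of $0$ such that $\bigl| \|a,b\| - \|a',b'\| \bigr| < \varepsilon$ whenever $a - a', b - b' \in U_{\varepsilon}$. Since $\{x_{n}\}$ is Cauchy in $X$, there exists $N = N(\varepsilon)$ with $x_{k} - x_{n} \in U_{\varepsilon}$ for all $k, n \geq N$. Taking $a = x_{k} - x_{n}$, $a' = 0$, $b = b' = z_{n}$ (so $b - b' = 0 \in U_{\varepsilon}$ trivially) yields
\begin{equation*}
\bigl\| x_{k} - x_{n},\, z_{n} \bigr\| = \bigl| \|x_{k} - x_{n}, z_{n}\| - \|0, z_{n}\| \bigr| < \varepsilon \qquad (k, n \geq N).
\end{equation*}
Passing to the limit in $n$ (which exists by the earlier theorem guaranteeing that $\lim_{n} \|u_{n}, v_{n}\|$ exists for equivalent pairs of Cauchy sequences) gives $\|\widehat{\xi}_{k} - \widehat{x}, \widehat{z}\| \leq \varepsilon$ for all $k \geq N$. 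Since $\varepsilon > 0$ was arbitrary, this proves the desired convergence, so $\widehat{x} \in \overline{\widehat{X}_{0}}$, and hence $\overline{\widehat{X}_{0}} = \widehat{X}$.

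The step I expect to require the most care is the uniform-continuity estimate: one has to pick the right pair $(a,b)$ vs.\ $(a',b')$ so that the second coordinate difference is automatically in $U_{\varepsilon}$ (this forces the choice $b = b' = z_{n}$ and makes the argument independent of the particular representative $\{z_{n}\} \in \widehat{z}$). Verifying that the quasi-norm version of the uniform-continuity hypothesis is strong enough to give this one-sided bound, rather than a two-sided one requiring simultaneous Cauchy control of both arguments, is the conceptual crux; the rest is bookkeeping with representatives of equivalence classes.
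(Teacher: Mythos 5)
The paper does not actually prove this statement: it records the theorem and refers to [1] (Freese--Cho) for the proof, so there is no in-paper argument to compare against. Your proof is the standard density argument --- approximate $\widehat{x}$ by the classes $\widehat{\xi}_{k}$ of the constant sequences $(x_{k},x_{k},\dots)$, represent $\widehat{\xi}_{k}-\widehat{x}$ by $\{x_{k}-x_{n}\}_{n}$, and use uniform continuity with $a=x_{k}-x_{n}$, $a'=0$, $b=b'=z_{n}$ to get $\|x_{k}-x_{n},z_{n}\|<\varepsilon$ for $k,n\geq N$ --- and this is essentially the argument of the cited reference; it is correct. Your choice $b=b'=z_{n}$ is exactly the right move: it makes the second-coordinate condition trivial and keeps the bound uniform over representatives of $\widehat{z}$. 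The one step that leans on an unstated convention is the assertion that Cauchyness of $\{x_{n}\}$, which the paper defines by $\lim_{n,m\rightarrow\infty}\|x_{n}-x_{m},z\|=0$ for every $z$, forces $x_{k}-x_{n}$ to lie eventually in the neighbourhood $U_{\varepsilon}$ appearing in the uniform-continuity definition: the paper never says in which topology $U_{\varepsilon}$ is a neighbourhood of $0$. If, as in [1], the neighbourhoods of $0$ are the ones generated by the $2$-norm itself (finite intersections of sets $\{x:\|x,a\|<\delta\}$), the implication is immediate and your proof goes through; you should state that identification explicitly, since the same implicit convention is already used in the paper's Definition 4 of equivalent Cauchy sequences.
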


\begin{theorem}
If $X$ is a linear space having a uniformly continuous quasi $2$-norm which
is defined as 
\begin{equation*}
\left\Vert \widehat{x},\widehat{y}\right\Vert =\lim_{n\rightarrow \infty
}\left\Vert x_{n},y_{n}\right\Vert
\end{equation*}%
then $\widehat{X}$ is complete and the pair $\left( \widehat{X},\left\Vert
.,.\right\Vert \right) $ $\ $is called completion of quasi $2$-normed space.
\end{theorem}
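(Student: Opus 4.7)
The plan is to adapt the classical completion argument to the quasi $2$-normed setting, using Theorem $9$ (density of the isometric copy $\widehat{X}_{0}$ of $X$ in $\widehat{X}$) as the bridge between abstract equivalence classes and concrete sequences in $X$. So I would fix an arbitrary Cauchy sequence $\{\widehat{x}^{(k)}\}$ in $\widehat{X}$ and set out to exhibit an explicit limit $\widehat{y}\in\widehat{X}$.

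The first step is to invoke density to choose, for each $k$, an element $\widehat{y}^{(k)}\in\widehat{X}_{0}$ with $\|\widehat{x}^{(k)}-\widehat{y}^{(k)},\widehat{z}\|<\tfrac{1}{k}$ (for any fixed test class $\widehat{z}$), and let $y_{k}\in X$ denote the point corresponding to $\widehat{y}^{(k)}$ under the isometry $\widehat{X}_{0}\cong X$ recorded just before Theorem $9$. The second step is to show that $\{y_{k}\}$ is itself Cauchy in $X$: for any $z\in X$, identifying $z$ with the class $\widehat{z}$ of the constant sequence and applying $2N_{4}^{\ast}$ twice gives
\[
\|y_{j}-y_{k},z\|
=\|\widehat{y}^{(j)}-\widehat{y}^{(k)},\widehat{z}\|
\le K^{2}\!\left(\tfrac{1}{j}+\|\widehat{x}^{(j)}-\widehat{x}^{(k)},\widehat{z}\|+\tfrac{1}{k}\right),
\]
and the right-hand side tends to $0$ as $j,k\to\infty$ because $\{\widehat{x}^{(k)}\}$ is Cauchy in $\widehat{X}$. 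Let $\widehat{y}\in\widehat{X}$ be the equivalence class of $\{y_{k}\}$; this is the natural candidate for the limit.

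The third step is to verify $\widehat{x}^{(k)}\to\widehat{y}$ in $\widehat{X}$. For any $\widehat{z}\in\widehat{X}$,
\[
\|\widehat{x}^{(k)}-\widehat{y},\widehat{z}\|
\le K\!\left(\|\widehat{x}^{(k)}-\widehat{y}^{(k)},\widehat{z}\|+\|\widehat{y}^{(k)}-\widehat{y},\widehat{z}\|\right);
\]
the first summand is at most $K/k$ by construction, and the second equals $\lim_{n\to\infty}\|y_{k}-y_{n},z_{n}\|$ for a representative $\{z_{n}\}\in\widehat{z}$, which is small uniformly in $n$ once $k$ is large because $\{y_{k}\}$ is Cauchy in $X$ and the quasi $2$-norm is uniformly continuous (Theorems $3$ and $4$ guarantee the limit in the display exists and is independent of representatives).

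The step I expect to be the main obstacle is the bookkeeping of the constant $K$ coming from $2N_{4}^{\ast}$. Unlike the ordinary normed case, every insertion of an intermediate vector multiplies the error by $K$, so the approximation by $\widehat{y}^{(k)}$ must be exploited with only a bounded number of triangle applications, otherwise the factors $K^{n}$ would blow up and defeat the argument. The saving feature is that each estimate above uses $2N_{4}^{\ast}$ at most twice, giving uniform control by $K^{2}$; checking that this suffices and that the resulting bounds on $\|\widehat{x}^{(k)}-\widehat{y},\widehat{z}\|$ can indeed be made arbitrarily small, independently of the choice of representative of $\widehat{z}$, is the technical heart of the proof.
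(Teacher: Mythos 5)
Your proposal follows essentially the same route as the paper's own proof: approximate the given Cauchy sequence in $\widehat{X}$ by elements of the dense isometric copy $\widehat{X}_{0}$ within $\frac{1}{k}$, transfer to a Cauchy sequence in $X$ via the isometry (with the same two applications of $2N_{4}^{\ast}$ yielding the $K^{2}$ bound), take its equivalence class as the candidate limit, and verify convergence with one further application of $2N_{4}^{\ast}$. Your closing step, justifying $\left\Vert \widehat{y}^{(k)}-\widehat{y},\widehat{z}\right\Vert \rightarrow 0$ explicitly through the Cauchy property of $\left\{ y_{k}\right\} $ and uniform continuity, is in fact spelled out more carefully than in the paper, which leaves that point somewhat implicit.
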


\begin{proof}
In order to see that $\left( \widehat{X},\left\Vert .,.\right\Vert \right) $
is complete, we have to show every Cauchy sequence in $\widehat{X}$ is
convergent in $\widehat{X}.$ Let $\left\{ a_{n}\right\} $\ be a Cauchy
sequence in $\widehat{X}$ and $\widehat{b}_{n}\in \widehat{X}_{0},$ $%
\widehat{c}_{n}\in \widehat{X}_{0}.$

Because of $\overline{\left( \widehat{X}_{0}\right) }=\widehat{X}$ then we
can write $\left\Vert \widehat{a}_{n}-\widehat{c}_{n},\widehat{b}\right\Vert
<\frac{1}{n}$ for each $n$, Also we have,%
\begin{eqnarray*}
\left\Vert \widehat{c}_{n}-\widehat{c}_{m},\widehat{b}\right\Vert
&=&\left\Vert \widehat{c}_{n}-\widehat{a}_{n}+\widehat{a}_{n}-\widehat{c}%
_{m},\widehat{b}\right\Vert \\
&\leq &K\left( \left\Vert \widehat{c}_{n}-\widehat{a}_{n},\widehat{b}%
\right\Vert +\left\Vert \widehat{a}_{n}-\widehat{c}_{m},\widehat{b}%
\right\Vert \right) \\
&=&K\left( \left\Vert \widehat{c}_{n}-\widehat{a}_{n},\widehat{b}\right\Vert
+\left\Vert \widehat{a}_{n}-\widehat{a}_{m}+\widehat{a}_{m}-\widehat{c}_{m},%
\widehat{b}\right\Vert \right) \\
&\leq &K\left\Vert \widehat{a}_{n}-\widehat{c}_{n},\widehat{b}\right\Vert
+K\left( K\left\Vert \widehat{a}_{n}-\widehat{a}_{m},\widehat{b}\right\Vert
+\left\Vert \widehat{a}_{m}-\widehat{c}_{m},\widehat{b}\right\Vert \right) ,%
\text{ for }K\geq 1 \\
&<&\frac{K}{n}+K^{2}\left\Vert \widehat{a}_{n}-\widehat{a}_{m},\widehat{b}%
\right\Vert +\frac{K^{2}}{m}
\end{eqnarray*}

Last from inequality when $n,m\rightarrow \infty $ right hand side will be
equal to $0.$ Thus 
\begin{equation*}
\lim_{n,m\rightarrow \infty }\left\Vert \widehat{c}_{n}-\widehat{c}_{m},%
\widehat{b}\right\Vert =0
\end{equation*}

this shows us that $\left\{ \widehat{c}_{n}\right\} $ is a Cauchy sequence
in $\widehat{X}.$

Use of $\widehat{X}$ and $\widehat{X}_{0}$ are isometric there is a Cauchy
sequence $\left\{ c_{n}\right\} $ in $X$ that corresponding $\left\{ 
\widehat{c}_{n}\right\} .$

On the other hand there is $\widehat{a}\in \widehat{X}$ such that $\widehat{a%
}\in \left\{ \widehat{c}_{n}\right\} $%
\begin{eqnarray*}
\left\Vert \widehat{a}_{n}-\widehat{a},\widehat{b}\right\Vert  &=&\left\Vert 
\widehat{a}_{n}-\widehat{c}_{n}+\widehat{c}_{n}-\widehat{a},\widehat{b}%
\right\Vert  \\
&\leq &K\left( \left\Vert \widehat{a}_{n}-\widehat{c}_{n},\widehat{b}%
\right\Vert +\left\Vert \widehat{c}_{n}-\widehat{a},\widehat{b}\right\Vert
\right)  \\
&<&\frac{K}{n}+K\left\Vert \widehat{c}_{n}-\widehat{a},\widehat{b}%
\right\Vert 
\end{eqnarray*}

Last from inequality as $n\rightarrow \infty $ and $\widehat{X}_{0}$ is
dense in $\widehat{X},$%
\begin{equation*}
\lim_{n\rightarrow \infty }\left\Vert \widehat{a}_{n}-\widehat{a},\widehat{b}%
\right\Vert =0
\end{equation*}

so arbitrary a Cauchy sequence $\left\{ \widehat{a}_{n}\right\} $ convergent
to $\widehat{a}\in \widehat{X}.$ Then $\left( \widehat{X},\left\Vert
.,.\right\Vert \right) $ is complete.
\end{proof}


\begin{thebibliography}{9}
\bibitem{Araci 1} R. W. Freese, Y. J. Cho, Geometry of linear 2-normed
spaces, Huntington N. Y. Nova Puplishes, (2001).

\bibitem{Araci 2} Park, C., Generalized quasi-Banach spaces and quasi -(2;
p) normed spaces, Journal of the Chungcheong Matematical Society, vol. 19,
no. 2, June 2006.

\bibitem{Araci 3} S. G\"{a}hler, Lineare 2-normierte R\"{a}ume, Diese Nachr.
28, 1-43 (1965).

\bibitem{Araci 4} K. Menger, Untersuchungen Veber allgeine Metrik, Math.
Ann. 100 (1928).

\bibitem{Araci 5} K. Isek\i , Mathematics on two normed spaces, Bull. Korean
Math. Soc. vol. 13, no. 2, 1976.

\bibitem{Araci 6} Albert G. White, 2-Banach spaces, Jr. of St. Bonaventure,
(1967), New York.
\end{thebibliography}
\end{document}